\newcommand{\UP}{\blacktriangle}
\newcommand{\Down}{\triangledown}
\theoremstyle{plain}
\newtheorem{theorem}{Theorem}[section]
\newtheorem{proposition}[theorem]{Proposition}
\newtheorem{lemma}[theorem]{Lemma}
\theoremstyle{remark}
\begin{document}

\title[$\mathrm{IntGC}$ has FMP]{Intuitionistic logic with a Galois connection has the finite model property$^\star$}
\thanks{$^\star$Addendum to the article: Wojciech Dzik, Jouni J{\"a}rvinen, and Michiro Kondo, 
\emph{Intuitionistic propositional logic with {G}alois connections}, Logic Journal of the IGPL
\textbf{18} (2010), 837--858. }

\author{Wojciech Dzik}
\address{WD:~Institute of  Mathematics, University of Silesia, ul.~Bankowa~12, \mbox{40-007 Katowice}, Poland}
\email{dzikw@silesia.top.pl}

\author{Jouni J{\"a}rvinen}
\address{JJ:~\url{http://sites.google.com/site/jounikalervojarvinen/}}
\email{Jouni.Kalervo.Jarvinen@gmail.com}

\author{Michiro Kondo}
\address{MK:~School of Information Environment, Tokyo Denki University, Inzai, 270-1382, Japan}
\email{kondo@sie.dendai.ac.jp}

\begin{abstract}
We show that the intuitionistic propositional logic with a Galois connection (IntGC), introduced by the authors, 
has the finite model property.
\end{abstract}

\maketitle

\section{Introduction}\label{Sec_intro}

In \cite{DJK10}, we introduced the intuitionistic propositional logic with a Galois connection (IntGC). 
In addition to the  intuitionistic logic axioms and inference rule of modus ponens, IntGC
contains just two rules of inference mimicking the condition defining Galois connections. 
A \emph{Galois connection} between partially ordered sets $P$ and $Q$ consists of two maps 
$f \colon P \to Q$ and $g \colon Q \to P$ such that for all $a \in P$ and $b \in Q$, we have
$f(a) \le b$ if and only if $a \le g(b)$. 
Note that in the literature can be found two ways to define Galois connections -- the one adopted here, 
in which the maps are order-preserving, and the other, in which they are reversing the order. 

We proved in  \cite{DJK10} that IntGC is complete with respect to both Kripke style and algebraic semantics.
Our intention was also to show that IntGC has the \emph{finite model property} (FMP), that is,
for every formula which is not provable, there exists a finite counter Kripke model.
Together with the other results presented in the paper, this would imply that the following
assertions are equivalent for every IntGC-formula $A$:
\begin{enumerate}[\rm  (i)]
\item $A$ is provable.

\item $A$ is valid in any finite distributive lattice with an additive and normal operator $f$;

\item $A$ is valid in any finite distributive lattice with a multiplicative and co-normal operator $g$;

\item  $A$ is valid in any finite Kripke model for IntGC.
\end{enumerate}

Unfortunately, our proof of FMP presented in \cite{DJK10} is incomplete and has some faults.
For instance, we did not show that the  frame on which the filtration is defined really forms a 
required Kripke frame. Therefore, here we present a more complete proof based on improved filtration model.

The paper is organised as follows. In Section~\ref{Sec_IntGC}, we recall the syntax, Kripke semantics 
and Kripke completeness of IntGC. Section~\ref{Sec_FMP} is devoted to proving the finite model property of IntGC.

\section{Logic IntGC} \label{Sec_IntGC}

The language $\mathcal{L}$ of IntGC is constructed from a countable set of propositional
variables $P$ and the connectives $\neg$, $\to$, $\vee$, $\wedge$, $\UP$, $\Down$.
The constant \emph{true} is defined by $\top := p \to p$ for some fixed propositional variable 
$p \in P$, and the constant \emph{false} is defined by $\bot := \neg \top$.

The logic IntGC is the smallest logic in $\mathcal{L}$ that contains the intuitionistic 
propositional logic Int, and is closed under the rules of \emph{substitution},
\emph{modus ponens}, and the rules:
\begin{enumerate}[({GC}1)]
\item If $A \to \Down B$ is provable, then $\UP A \to B$ is provable.
\item If $\UP A \to B$ if provable, then $A \to \Down B$ is provable.
\end{enumerate}

It is known that the following rules are admissible in IntGC:
\begin{enumerate}[\rm ({r}1)]
\item If $A$ is provable, then $\Down A$ is provable. 
\item If $A \to B$ is provable, then $\Down A \to \Down B$  and $\UP A \to \UP B$ are provable.
\end{enumerate}
In addition, the following formulas are provable:
\begin{enumerate}[\rm ({f}1)]
\item $A \to \Down \UP A$ \ and \  $\UP \Down A \to A$.

\item $\UP A \leftrightarrow \UP \Down \UP A$ \ and \ $\Down A \leftrightarrow \Down \UP \Down A$.

\item $\Down \top$  \ and \  $\neg \UP \bot$.

\item $\Down (A \wedge B)  \leftrightarrow  \Down A \wedge \Down B$ \ and \
$\UP (A \vee B)  \leftrightarrow \UP A \vee \UP B$.

\item $\Down (A \to B) \to (\Down A \to \Down B)$.
\end{enumerate}

A structure $\mathcal{F} = (X, \le, R)$ is called a \emph{Kripke frame} of IntGC, if $X$ is a non-empty set, 
$\le$ is a preorder on $X$, and $R$ is a relation on $X$ such that
\begin{equation} \tag{$\star$} \label{Eq:frame} 
({\ge} \circ R \circ {\ge}) \subseteq R .
\end{equation}

Let $v$ be a function $v \colon P \to \wp(X)$ assigning to each propositional variable $p$ 
a subset $v(p)$ of $X$. Such functions are called \emph{valuations} and the pair 
$\mathcal{M} = (\mathcal{F}, v)$ is called an IntGC-\emph{model}.
For any $x \in X$ and $A \in \Phi$, we define a {\em satisfiability relation} in
$\mathcal{M}$ inductively by the following way:
\begin{align*}
x \models p &\iff x \in v(p), \\
x \models A \wedge B &\iff x \models A \mbox{ and } x \models A, \\
x \models A \vee B &\iff x \models A \mbox{ or } x \models A, \\
x \models A \to B &\iff \mbox{ for all } y \geq x, \  y \models A \mbox{ implies }  y \models B, \\
x \models \neg A &\iff \mbox{ for no }y \geq x \mbox{ does }  y  \models A, \\
x \models \UP A &\iff \mbox{ exists } y \mbox{ such that } x \, R \, y \mbox{ and }  y \models  A, \mbox{ and}\\
x \models \Down A &\iff \mbox{ for all } y, y \, R \, x \mbox{ implies }  y \models  A.
\end{align*}

Let $x \leq y$. If $x \models \UP A$, there exists  $z$ such that $x \, R \, z$ and $z \models A$.  
Now $y \geq x$, $x \, R \, z$, and $z \geq z$ imply $y \, R \, z$ by \eqref{Eq:frame}. 
Thus, $y \models \UP A$. Similarly, if $y \not \models \Down A$, then there exists $z$ such that 
$z \, R \, y$ and $z \not \models A$. Now $z \geq z$, $z \, R \, y$, and $y \geq x$ imply $z \, R \,x$. 
This means $x \not \models \Down A$. Hence, the frame is \emph{persistent}. 

An IntGC-formula $A$ is \emph{valid in a Kripke model} $\mathcal{M}$,
if $x \models A$ for all $x \in X$. The formula is \emph{valid in a Kripke frame} $\mathcal{F}$, 
if $A$ is valid in every model based on $\mathcal{F}$. The formula $A$ is \emph{Kripke valid}
if $A$ is valid in every frame.

We proved in \cite{DJK10} that every formula is Kripke valid if and only if it is provable.

\section{$\mathrm{IntGC}$ has FMP} \label{Sec_FMP}

Let $A$ be a formula that is not provable. Then,  there exists a Kripke model 
$\mathcal{M} = (X,\leq,R)$ such that $A$ is not valid in $\mathcal{M}$.
We construct a counter model for $A$ on a finite frame.

Let $\mathrm{Sub}(A)$ be the set of subformulas of $A$. We define the set 
\[
\Gamma = \mathrm{Sub}(A) \cup \{ \Down \UP B \mid \UP B \in \mathrm{Sub}(A) \}
                         \cup \{ \UP \Down B \mid \Down B \in \mathrm{Sub}(A) \}.
\]
From this set, we can now define the set
\begin{align*}
\Sigma = \mathrm{Sub}(A) & \cup \{ (\Down \UP)^n \Down B \mid n \geq 0 \mbox{ and } \Down B \in \Gamma \} \\
                         & \cup \{ \UP (\Down \UP)^n \Down B \mid n \geq 0 \mbox{ and } \Down B \in \Gamma \} \\
                         & \cup \{ (\UP \Down)^n \UP B \mid n \geq 0 \mbox{ and } \UP B \in \Gamma \} \\
                         & \cup \{ \Down (\UP \Down)^n \UP B \mid n \geq 0 \mbox{ and } \UP B \in \Gamma \}.
\end{align*}
Obviously, $\mathrm{Sub}(A) \subseteq \Gamma \subseteq \Sigma$.

\begin{lemma} \label{Lem:Increasing}
\begin{enumerate}[\rm (a)]
\item If $\Down B \in \Sigma$, then $\UP \Down B\in \Sigma$.  
\item If $\UP B\in \Sigma$, then $\Down \UP B \in \Sigma$.
\end{enumerate}\end{lemma}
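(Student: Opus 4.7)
The plan is to do a case analysis based on which of the five constituent sets in the definition of $\Sigma$ the formula belongs to. Since the statement concerns formulas of the form $\Down B$ (resp. $\UP B$), I first observe that such a formula, starting with $\Down$ (resp. $\UP$), can live in only three of the five sets. Specifically, a formula starting with $\Down$ must come from $\mathrm{Sub}(A)$, from $\{(\Down \UP)^n \Down C \mid n \geq 0,\ \Down C \in \Gamma\}$, or from $\{\Down (\UP \Down)^n \UP C \mid n \geq 0,\ \UP C \in \Gamma\}$; the other two sets have members starting with $\UP$.

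For part (a), in each of the three subcases I would produce explicit membership for $\UP \Down B$. If $\Down B \in \mathrm{Sub}(A)$, then by the definition of $\Gamma$ we immediately get $\UP \Down B \in \Gamma \subseteq \Sigma$. If $\Down B = (\Down \UP)^n \Down C$ with $\Down C \in \Gamma$, then $\UP \Down B = \UP (\Down \UP)^n \Down C$, which fits the second defining set of $\Sigma$. If $\Down B = \Down (\UP \Down)^n \UP C$ with $\UP C \in \Gamma$, then $\UP \Down B = (\UP \Down)^{n+1} \UP C$, which fits the fourth defining set of $\Sigma$.

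Part (b) is entirely symmetric: if $\UP B \in \mathrm{Sub}(A)$, then $\Down \UP B \in \Gamma$ by definition; if $\UP B = \UP (\Down \UP)^n \Down C$, then $\Down \UP B = (\Down \UP)^{n+1} \Down C$ sits in the second set; and if $\UP B = (\UP \Down)^n \UP C$, then $\Down \UP B = \Down (\UP \Down)^n \UP C$ sits in the fifth set.

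There is no real obstacle here; the lemma is purely syntactic, and the only thing to be careful about is the bookkeeping for the indices $n$ and the observation that prepending $\UP$ to a block beginning with $\Down$ (or vice versa) simply lengthens the alternating prefix by one. The cleanest way to present it is a short two-part proof listing the three subcases in each part and exhibiting membership in the appropriate clause of the definition of $\Sigma$.
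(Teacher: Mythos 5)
Your proof is correct and takes essentially the same approach as the paper's: a case analysis on which defining clause of $\Sigma$ the formula falls under, with the index bookkeeping handled identically. The only cosmetic difference is that you treat the $\mathrm{Sub}(A)$ case separately via the definition of $\Gamma$, whereas the paper absorbs it into the $(\Down\UP)^n\Down C$ clause with $n=0$.
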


\begin{proof}(a)
Suppose that $\Down B \in \Sigma$. If $\Down B$ is of the form $(\Down \UP)^n \Down C$ for some $n \geq 0$, where 
$\Down C \in \Gamma$, then $\UP \Down B = \UP (\Down \UP)^n \Down C$ belongs to $\Sigma$ by definition. 
If $\Down B$ has the form of $\Down (\UP \Down)^m \UP C$ for some $m \geq 0$ where $\UP C \in \Gamma$, then
by the definition, $\UP \Down B = \UP \Down (\UP \Down)^m \UP C = (\UP\Down)^{m+1}\UP C$ is in $\Sigma$.

Assertion (b) can be proved analogously.
\end{proof}

A set of IntGC-formulas $\Sigma$ is said to be \emph{closed under subformulas} if $B \in \Sigma$ and $C \in \mathrm{Sub}(B)$  
imply $C \in \Sigma$.

\begin{lemma} \label{Lem:Subformula}
The set $\Sigma$ is closed under subformulas.
\end{lemma}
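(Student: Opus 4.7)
The plan is to fix $C \in \Sigma$ and an immediate subformula $D$ of $C$, and verify $D \in \Sigma$; applying this repeatedly (i.e.\ proceeding by induction on the number of connectives) then yields closure under arbitrary subformulas. Since $\Sigma$ is given as a union of five clauses, the natural approach is a case analysis on which clause witnesses $C \in \Sigma$.

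If $C \in \mathrm{Sub}(A)$ the claim is immediate, because $\mathrm{Sub}(A)$ is itself closed under subformulas and sits inside $\Sigma$. Suppose next that $C = (\Down\UP)^n \Down B$ with $n \geq 0$ and $\Down B \in \Gamma$. For $n \geq 1$ the immediate subformula is $\UP(\Down\UP)^{n-1}\Down B$, which is in $\Sigma$ by its defining third clause. For $n = 0$ we have $C = \Down B$ with $B$ as immediate subformula, and here one consults the definition of $\Gamma$: either $\Down B \in \mathrm{Sub}(A)$, whence $B \in \mathrm{Sub}(A) \subseteq \Sigma$; or $\Down B = \Down\UP B'$ with $\UP B' \in \mathrm{Sub}(A)$, whence $B = \UP B' \in \mathrm{Sub}(A) \subseteq \Sigma$. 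The clause $C = \UP(\Down\UP)^n\Down B$ reduces in one step to $(\Down\UP)^n \Down B$, already handled. The remaining two clauses $C = (\UP\Down)^n \UP B$ and $C = \Down(\UP\Down)^n\UP B$ are treated by the symmetric argument, exchanging the roles of $\UP$ and $\Down$ and using that each $\UP B \in \Gamma$ is either in $\mathrm{Sub}(A)$ or of the form $\UP\Down B'$ with $\Down B' \in \mathrm{Sub}(A)$.

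The one place where the argument is not pure bookkeeping is the transition that peels off the outermost modality of a $\Gamma$-element that is not itself in $\mathrm{Sub}(A)$: from $\Down\UP B' \in \Gamma$ one descends to $\UP B'$, and from $\UP\Down B' \in \Gamma$ to $\Down B'$. Neither of these intermediate formulas is visibly of one of the four iterated forms appearing in $\Sigma$; they are saved purely by the fact that they were required to lie in $\mathrm{Sub}(A)$ when $\Gamma$ was built. This observation is the only real content of the lemma, since all other subformulas of a $C \in \Sigma$ arise either by stripping off a prefix $\Down\UP$ or $\UP\Down$ (and so fall under an adjacent defining clause of $\Sigma$) or by descending inside a $\mathrm{Sub}(A)$-formula (and so remain in $\mathrm{Sub}(A) \subseteq \Sigma$).
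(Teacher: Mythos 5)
Your proof is correct and follows essentially the same route as the paper's: a case analysis on which defining clause of $\Sigma$ witnesses membership, peeling off one connective at a time and inducting. If anything, your treatment of the base case $n=0$ is slightly more careful than the paper's, since you explicitly split on whether the $\Gamma$-element $\Down B$ (resp.\ $\UP B$) lies in $\mathrm{Sub}(A)$ outright or arises as $\Down\UP B'$ (resp.\ $\UP\Down B'$) from the closure clauses of $\Gamma$, a distinction the paper's case (iii) glosses over.
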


\begin{proof} 
Let $B \in \Sigma$. If $B$ is of the form $C \vee D$, $C \wedge D$, $C \to D$, or $\neg C$, then
$B$ must be in $\mathrm{Sub}(A)$ by the definition of $\Sigma$. Thus,
$C,D \in \textrm{Sub}(A) \subseteq \Sigma$.

If $\UP B \in \Sigma$ is of the form $\UP (\Down \UP)^n \Down C$ for some  $\Down C \in \Gamma$ and $n \geq 0$,
then  $B =  (\Down \UP)^n \Down C  \in \Sigma$.

If $\UP B \in \Sigma$ has the form $(\UP \Down)^n \UP C$ for some $n \geq 0$ and  $\UP C \in \Gamma$, then
$\UP B = \UP (\Down \UP)^n \Down D$, since $\UP C \in \Gamma$ means that
$C = \Down D  \in \mathrm{Sub}(A)$. Then, $B =  (\Down \UP)^n \Down D \in \Sigma$.
 
The remaining two cases are proved analogously.
\end{proof}

We now define for every formula $B \in \Sigma$, a unique formula $B^* \in \Gamma$ as follows:
\begin{enumerate}[(i)]
\item If $B \in \mathrm{Sub}(A)$ and $B$ is not of the form $\Down C$ nor $\UP C$, then $B^* = B$.
\item If $B$ is of the form $(\Down \UP)^n \Down C$, where $\Down C \in \Gamma$, then $B^* = \Down C$.
\item If $B$ is of the form $\UP (\Down \UP)^n \Down C$, where $\Down C = \Down \UP D \in \Gamma$ for some 
$\UP D \in \mathrm{Sub}(A)$, then $B^* = \UP D$.
\item If $B$ is of the form $(\UP \Down)^n \UP C$, where $\UP C \in \Gamma$, then $B^* = \UP C$.
\item If $B$ is of the form $\Down (\UP \Down)^n \UP C$, where $\UP C = \UP \Down D \in \Gamma$ for some
$\Down D \in \mathrm{Sub}(A)$, then $B^* = \Down D$.
\end{enumerate}

Related to the above definitions, we can write the following lemma.

\begin{lemma} \label{Lem:Equivalent}
For every $B \in \Sigma$, there exists a unique $B^* \in \Gamma $  such that the formula 
$B \leftrightarrow B^*$  is provable in {\rm IntGC}.
\end{lemma}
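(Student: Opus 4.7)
The plan is to verify the definition of $B^{*}$ case by case and to establish the provable equivalence $B \leftrightarrow B^{*}$ in each case. The workhorses are the two equivalences listed in (f2), namely $\UP A \leftrightarrow \UP \Down \UP A$ and $\Down A \leftrightarrow \Down \UP \Down A$, together with the admissible rule (r2) which propagates provable equivalences under the modalities $\UP$ and $\Down$.

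First I would prove two auxiliary facts by induction on $n$: for every $n \geq 0$, both $(\Down \UP)^{n} \Down C \leftrightarrow \Down C$ and $(\UP \Down)^{n} \UP C \leftrightarrow \UP C$ are provable. The base case is trivial, and the induction step is a single application of (f2) inside the corresponding modality via (r2). Granted these, cases (ii) and (iv) of the definition of $B^{*}$ give $B \leftrightarrow B^{*}$ immediately. For case (iii), the hypothesis $\Down C = \Down \UP D$ lets me rewrite $B = \UP(\Down \UP)^{n} \Down \UP D$ as $\UP(\Down \UP)^{n+1} D$, and one more application of the $\UP$-version of (f2) (iterated $n+1$ times) yields $B \leftrightarrow \UP D = B^{*}$. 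Case (v) is symmetric, and case (i) is immediate since $B^{*} = B$. The membership $B^{*} \in \Gamma$ is direct from the shape of $\Gamma$ in each clause.

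The delicate point is the uniqueness of $B^{*}$ as an element of $\Gamma$. A given $B \in \Sigma$ can a priori fit more than one of the clauses (ii)--(v) (for example, a formula of the form $\Down(\UP \Down)^{n} \UP \Down D$ can also be read as $(\Down \UP)^{n+1} \Down D$), so I must check that the clauses agree on $B^{*}$ wherever they overlap. This I would handle by observing that for any word in $\{\UP,\Down\}^{*}$ applied to a non-modal subformula or to an element of $\mathrm{Sub}(A)$ with the shape required by $\Gamma$, the decomposition into an outer modality (possibly empty) followed by an alternating $\Down \UP$ or $\UP \Down$ block terminating in the $\Gamma$-witness determines the same witness in $\Gamma$; a short case check against (ii)--(v) then shows that the resulting $B^{*}$ is the same regardless of which clause one used to compute it. Together with case (i), which cannot overlap with any of (ii)--(v) because it excludes outer $\Down$ and $\UP$, this establishes uniqueness.

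The main obstacle I expect is this bookkeeping for uniqueness rather than any genuine logical difficulty: the provability assertions are all short consequences of (f2) and (r2), but one has to be careful to check that the five-clause definition of $B^{*}$ is consistent, and that in the mixed clauses (iii) and (v) the side conditions $\Down C = \Down \UP D \in \Gamma$ with $\UP D \in \mathrm{Sub}(A)$, respectively $\UP C = \UP \Down D \in \Gamma$ with $\Down D \in \mathrm{Sub}(A)$, really identify $D$ uniquely so that $B^{*}$ is well defined.
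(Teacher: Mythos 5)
Your proof of the provable equivalences is essentially the paper's own argument: both peel off the alternating prefixes by iterating the (f2) equivalences (via (r2)) exactly as in cases (ii) and (iii) of the published proof, with the other cases symmetric. Your additional attention to well-definedness and uniqueness of $B^*$ across overlapping clauses is a point the paper's proof passes over silently, and is worth the bookkeeping you describe, but it does not change the route.
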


\begin{proof} We consider cases (ii) and (iii) only.

(ii) $B = (\Down \UP)^n \Down C = (\Down \UP)^{n-1} \Down \UP \Down C \leftrightarrow 
(\Down \UP)^{n-1} \Down C \leftrightarrow \cdots \leftrightarrow \Down  C = B^*$,
because $\Down C \leftrightarrow \Down \UP \Down C$ by (f2).

(iii) $B =  \UP (\Down \UP)^n \Down C =  \UP \Down \UP (\Down \UP)^{n-1}  \Down C \leftrightarrow
 \UP (\Down \UP)^{n-1}  \Down C \leftrightarrow \cdots \leftrightarrow \UP \Down C =
\UP \Down \UP D = \UP D = B^*$, since $\UP A \leftrightarrow \UP \Down \UP A$ for any $A$.
\end{proof} 

Lemma~\ref{Lem:Equivalent} says that since $\Gamma$ is finite, also the set $\Sigma$ can be considered ``finitary'',
because it can be divided into classes of provably equivalent formulas such that each class corresponds 
to one formula of $\Gamma$.

Now we define an equivalence $\sim$ on the set $X$ by setting 
\[ x \sim y \iff (\forall B \in \Sigma)\, x \models B \mbox{ iff } y \models B. \]
This means that points $x$ and $y$ are equivalent if they satisfy exactly the
same formulas of $\Sigma$.
We denote by $[x]$ the ${\sim}$-class of $x$, and $X/{\sim}$ is the set of
all ${\sim}$-classes.

\begin{lemma} \label{Lem:Quotient}
The quotient set $X/{\sim}$ is finite.
\end{lemma}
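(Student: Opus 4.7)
The plan is to replace the infinite family $\Sigma$ in the definition of $\sim$ by the finite family $\Gamma$, using Lemma~\ref{Lem:Equivalent} together with the soundness half of Kripke completeness for IntGC. The set $\Sigma$ is in general infinite because the clauses $(\Down\UP)^n\Down B$ and $(\UP\Down)^n\UP B$ range over all $n \geq 0$, so finiteness of $X/{\sim}$ cannot be read off directly from the definition.

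First I would note that $\Gamma$ itself is finite: $\mathrm{Sub}(A)$ is finite, and the two extra families making up $\Gamma$ are indexed by $\mathrm{Sub}(A)$. Next, I would invoke Lemma~\ref{Lem:Equivalent}, which states that every $B \in \Sigma$ is provably equivalent in IntGC to some $B^* \in \Gamma$. Soundness of IntGC with respect to Kripke semantics (one direction of the completeness theorem recalled at the end of Section~\ref{Sec_IntGC}) then transports this syntactic equivalence into a semantic one: for every $x \in X$ in the model $\mathcal{M}$, one has $x \models B$ if and only if $x \models B^*$.

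It then follows that $x \sim y$ holds if and only if $x$ and $y$ agree on every formula of $\Gamma$. One direction uses $\Gamma \subseteq \Sigma$ directly; the other uses that agreement on $B^*$ transports through the semantic equivalence $x \models B \Leftrightarrow x \models B^*$ to agreement on $B$. Hence the ${\sim}$-class of a point $x$ is entirely determined by the single subset $\{\, C \in \Gamma : x \models C \,\}$ of $\Gamma$, which gives the bound $|X/{\sim}| \leq 2^{|\Gamma|}$, a finite number. The only subtle step is the appeal to soundness in order to promote the provable equivalence $B \leftrightarrow B^*$ to a semantic equivalence in the model $\mathcal{M}$; everything else is routine bookkeeping.
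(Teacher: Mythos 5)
Your proposal is correct and follows essentially the same route as the paper: both reduce agreement on the infinite set $\Sigma$ to agreement on the finite set $\Gamma$ via Lemma~\ref{Lem:Equivalent} (with soundness turning provable equivalence of $B$ and $B^*$ into semantic equivalence at each point), and conclude that a $\sim$-class is determined by a subset of $\Gamma$. Your version is in fact slightly more explicit than the paper's, which leaves the soundness appeal and the $2^{|\Gamma|}$ bound implicit.
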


\begin{proof} Let $x \in X$. For all $y \in X$, $[x] \ne [y]$ means that there exists
a formula $B \in \Sigma$ that ``separates'' $x$ and $y$, that is, either (i) $x \models B$
and $y \not \models B$, or (ii) $y \models B$ and $x \not \models B$.
For instance, in case (i) this means by Lemma~\ref{Lem:Equivalent} that  $
x \models B^*$, $y \not \models B^*$, and $B^* \in \Gamma$. Because the set $\Gamma$ 
is finite, only a finite number of classes can be ``separated'' from $[x]$. 
Hence, also the quotient set $X/{\sim}$ must be finite.
\end{proof}

We denote  $X/{\sim}$ simply by $X^f$. We define in $X^f$  the relations $\leq^f$ and $R^f$ by setting:
\begin{align*}
[x] \le^f [y] &\iff (\forall B \in \Sigma) \, x \models  B \mbox{ implies } y \models B; \\
[x] \, R^f \, [y] &\iff (\forall B \in \Sigma) \,  \Down B \in \Sigma \mbox{ and } y \models \Down B \mbox{ imply } x \models B.
\end{align*}

We can now write the following lemma.

\begin{lemma}\label{Lem:RelProperties}
\begin{enumerate}[\rm (a)]
\item If \ $x\le y$, then $[x] \le^f [y]$.
\item If \ $x \, R \, y$, then $[x] \, R^f \, [y]$.
\end{enumerate}
\end{lemma}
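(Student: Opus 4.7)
\medskip

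\noindent\textbf{Proof proposal.} My plan is to reduce each item to something already established in the excerpt, so that the argument becomes essentially a direct unpacking of the definitions of $\le^f$ and $R^f$ together with the semantics of $\UP$ and $\Down$ and the persistence property of IntGC-frames.

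For part (a), the aim is to show that if $x\le y$ in $\mathcal{M}$, then for every $B\in\Sigma$, $x\models B$ implies $y\models B$; this is exactly the persistence property of the IntGC Kripke model. Persistence for $\wedge$, $\vee$, $\to$, $\neg$, and atomic formulas is standard from intuitionistic semantics. Persistence for $\UP$ and $\Down$ was already verified in the paragraph after the definition of the satisfaction relation, where the frame condition \eqref{Eq:frame} was used precisely to transport witnesses along $\le$. A routine induction on formula complexity therefore gives persistence on the whole language, hence in particular on $\Sigma$, and this is the defining condition for $[x]\le^f[y]$.

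For part (b), I would simply unfold the definition of $R^f$ and apply the clause for $\Down$. Let $B$ be any formula with $\Down B\in\Sigma$ and assume $y\models\Down B$. By the satisfaction clause, this means that $z\models B$ for every $z$ with $z\,R\,y$. Since the hypothesis gives $x\,R\,y$, specialising $z$ to $x$ yields $x\models B$. As $B$ was arbitrary subject to $\Down B\in\Sigma$, this is exactly the condition $[x]\,R^f\,[y]$.

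I do not foresee a real obstacle here: part (a) is a corollary of persistence (already in the paper) and part (b) is an immediate instance of the semantic clause for $\Down$. The only slightly delicate point, which I would mention for completeness, is that both $\le^f$ and $R^f$ are well-defined on $\sim$-classes -- that is, their definitions do not depend on the choice of representatives -- but this is immediate from the definition of $\sim$ as indistinguishability with respect to $\Sigma$ combined with Lemma~\ref{Lem:Subformula} (which ensures that the side condition $\Down B\in\Sigma$ can be tested on representatives).
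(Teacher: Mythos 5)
Your proof is correct, and part (a) matches the paper exactly: both you and the authors reduce it to persistence of IntGC-models, which was verified (for $\UP$ and $\Down$, using the frame condition \eqref{Eq:frame}) right after the definition of the satisfaction relation. Part (b), however, takes a genuinely different and in fact more elementary route. You simply unfold the semantic clause for $\Down$: from $y \models \Down B$ and $x \, R \, y$ you read off $x \models B$ directly, with no auxiliary machinery. The paper instead argues syntactically-semantically: it first invokes Lemma~\ref{Lem:Increasing} to get $\UP \Down B \in \Sigma$, then uses $x \, R \, y$ and $y \models \Down B$ to conclude $x \models \UP \Down B$, and finally appeals to the validity of the provable formula $\UP \Down B \to B$ (f1) to land on $x \models B$. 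Your version needs neither Lemma~\ref{Lem:Increasing} nor soundness of (f1), so it is shorter and logically lighter; the paper's version has the cosmetic advantage of running parallel to the argument later used in Lemma~\ref{Lem:Condition}, where the detour through $\Sigma$-closure is genuinely needed. Your closing remark on well-definedness of $\le^f$ and $R^f$ on $\sim$-classes is a welcome addition the paper omits, though it follows already from the fact that all formulas occurring in the defining conditions ($B$ and $\Down B$) lie in $\Sigma$, so the appeal to Lemma~\ref{Lem:Subformula} is not really needed there.
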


\begin{proof} Claim (a) is obvious, because our Kripke frames are persistent.

(b) Assume $x \, R \,y$, $B \in \Sigma$, and $\Down B \in \Sigma$. 
By Lemma~\ref{Lem:Increasing}, also $\UP \Down B \in \Sigma$.
If $y \models \Down B$, then $x \,R \,y$ gives $x \models \UP \Down B$. We have
$x \models B$, because $\UP \Down B \to B$ is a valid formula. Hence, $|x| \, R^f \, |y|$.
\end{proof}

\begin{lemma}\label{Lem:FitratrationFrame}
The structure $\mathcal{F}^f = (X^f,\le^f,R^f)$ is a Kripke frame.
\end{lemma}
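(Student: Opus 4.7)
The plan is to verify the three things the paper requires of a Kripke frame: that $X^f$ is non-empty, that $\le^f$ is a preorder, and that the relation $R^f$ satisfies the persistence condition \eqref{Eq:frame}. Non-emptiness is immediate, since $X$ is non-empty and $X^f$ is its quotient. For the preorder properties, reflexivity of $\le^f$ is immediate from the definition (each $B \in \Sigma$ trivially satisfies $x \models B$ implies $x \models B$), and transitivity follows by chaining the defining implications: if $[x] \le^f [y]$ and $[y] \le^f [z]$, then for each $B \in \Sigma$, $x \models B$ gives $y \models B$, which gives $z \models B$.

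The core of the argument is the persistence condition, namely that $({\ge^f} \circ R^f \circ {\ge^f}) \subseteq R^f$. To verify it, I would take classes with $[x] \ge^f [u]$, $[u]\, R^f\, [v]$, and $[v] \ge^f [y]$, and show $[x]\, R^f\, [y]$. Let $B$ be any formula with $\Down B \in \Sigma$ and suppose $y \models \Down B$. I would first push $\Down B$ upward using $[v] \ge^f [y]$ (i.e.\ $[y] \le^f [v]$) to obtain $v \models \Down B$; then apply the definition of $R^f$ to $[u]\, R^f\, [v]$ to conclude $u \models B$; and finally push $B$ upward using $[x] \ge^f [u]$ (i.e.\ $[u] \le^f [x]$) to obtain $x \models B$. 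Thus $[x]\, R^f\, [y]$ by the definition of $R^f$.

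The one step that is easy to overlook and is genuinely needed is the final push from $u$ to $x$: to invoke $[u] \le^f [x]$ with $B$, we need $B \in \Sigma$. This is not literally part of the hypothesis (we only have $\Down B \in \Sigma$), and this is where Lemma~\ref{Lem:Subformula} enters: since $\Sigma$ is closed under subformulas and $B \in \mathrm{Sub}(\Down B)$, we indeed have $B \in \Sigma$. This is the main (and really only) subtlety in the argument; once the subformula closure is invoked, the rest is bookkeeping on the defining implications of $\le^f$ and $R^f$.
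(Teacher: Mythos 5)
Your proof is correct and follows essentially the same route as the paper's: note that $\le^f$ is a preorder and then chase a formula $B$ with $\Down B \in \Sigma$ through the three relations to establish $({\ge^f} \circ R^f \circ {\ge^f}) \subseteq R^f$, exactly as the paper does. The only difference is cosmetic: the paper's definition of $R^f$ already quantifies over $B \in \Sigma$, so the membership $B \in \Sigma$ that you derive from Lemma~\ref{Lem:Subformula} is in fact part of the hypothesis, though invoking subformula closure there does no harm.
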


\begin{proof} It is clear that $\le^f$ is a preorder. Therefore, it is enough to show that 
\[ ({\ge}^f \circ R^f \circ {\ge}^f) \subseteq R^f .\]
Suppose that $[x] \geq^f [y]$, $[y]  \, R^f [z]$, and $[z] \geq^f [w]$.
For all $B \in \Sigma$, if $\Down B \in \Sigma$ and $w \models \Down B$, 
then $z \models \Down B$ because $[z] \geq^f [w]$. Now
$y \models B$ by  $[y]  \, R^f [z]$. Finally, $[x] \ge^f [y]$ implies
$x \models B$.  Thus, $[x] \, R^f [w]$,
\end{proof}

Our next lemma gives another condition for $R^f$.

\begin{lemma} \label{Lem:Condition}
$[x] \, R^f [y] \iff (\forall B \in \Sigma) \,  \UP B \in \Sigma \mbox{ and } y \models B \mbox{ imply } x \models \UP B$.
\end{lemma}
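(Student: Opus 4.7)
The plan is to prove both implications by symmetry, using the adjunction formulas $B \to \Down \UP B$ and $\UP \Down B \to B$ from (f1) to transport satisfaction between $B$ and its wrapped versions, and using Lemma~\ref{Lem:Increasing} to guarantee that the relevant wrapped formulas stay inside $\Sigma$ so that the definitions of $R^f$ apply.

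For the forward direction, I assume $[x]\, R^f \,[y]$ in the sense of the original definition, and take $B\in\Sigma$ with $\UP B \in \Sigma$ and $y \models B$. Applying Lemma~\ref{Lem:Increasing}(b) to $\UP B\in\Sigma$ gives $\Down\UP B\in\Sigma$. Since the formula $B \to \Down\UP B$ is provable by (f1), the assumption $y\models B$ yields $y\models \Down\UP B$. Now I can invoke the original definition of $R^f$ with the formula $\UP B$ playing the role of ``$B$'' (noting that both $\UP B$ and $\Down\UP B$ lie in $\Sigma$), and conclude $x\models \UP B$, as desired.

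For the backward direction, I assume the right-hand side condition and take $B\in\Sigma$ with $\Down B\in\Sigma$ and $y\models \Down B$. Lemma~\ref{Lem:Increasing}(a) gives $\UP\Down B\in\Sigma$, so I may apply the hypothesis to the formula $\Down B$ (which is in $\Sigma$ and whose $\UP$-wrapping is in $\Sigma$), obtaining $x\models \UP\Down B$. Finally, since $\UP\Down B\to B$ is provable by (f1), persistence gives $x\models B$.

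No step looks like a serious obstacle; the only things to check are that the membership conditions in $\Sigma$ are exactly what Lemma~\ref{Lem:Increasing} supplies, and that (f1) is correctly interpreted as the soundness facts $y\models B \Rightarrow y\models\Down\UP B$ and $x\models\UP\Down B \Rightarrow x\models B$, which follow from the already established Kripke soundness of IntGC.
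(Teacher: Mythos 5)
Your proof is correct and follows essentially the same route as the paper: both directions use (f1) together with Lemma~\ref{Lem:Increasing} to instantiate the definition of $R^f$ (respectively the right-hand condition) at the wrapped formula $\UP B$ (respectively $\Down B$). One tiny slip: the final step $x \models \UP\Down B \Rightarrow x \models B$ comes from soundness of $\UP\Down B \to B$ together with reflexivity of $\le$, not from persistence, but as your closing remark shows you intend exactly this, the argument stands as written.
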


\begin{proof}
Let $B \in \Sigma$.
Assume $[x] \, R^f [y]$, $\UP B \in  \Sigma$ and $y \models B$. 
Since $B \to \Down \UP B$ is a provable formula, we have $y \models B \to \Down \UP B$ 
and so $y \models \Down \UP B$. Because $\UP B \in \Sigma$, 
Lemma~\ref{Lem:Increasing} gives $\Down \UP \in \Sigma$.
Since $[x] \, R^f [y]$, we get $x \models \UP B$.

Conversely, assume  that the right-side of the condition holds. If $\Down B \in \Sigma$  and  
$y \models \Down B$, then by  Lemma~\ref{Lem:Increasing}, $\UP \Down B \in \Sigma$, from which
we get $x \models \UP \Down B$ by the assumption. Because $\UP \Down B \to B$ is a provable
formula, we have $x \models B$. Thus, $[x] \, R^f [y] $.
\end{proof}

We define the valuation $v^f$  in such a way that for all proposition variables $p \in \Sigma$:
\[ v^f(p) = \{ [x] \mid x \models p \} . \]
Then, $\mathcal{M}^f = (X^f, \leq^f, R^f, v^f)$ is called \emph{filtration of $\mathcal{M}$ through $\Sigma$}.

\begin{lemma} \label{Lem:Filtration}
For any $B \in \Sigma$ and $x \in X$, $x \models B \ \mbox{iff}  \ [x] \models B$.
\end{lemma}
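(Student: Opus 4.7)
The plan is to proceed by induction on the complexity of $B \in \Sigma$, using that $\Sigma$ is closed under subformulas (Lemma~\ref{Lem:Subformula}), so that the induction hypothesis is available for every proper subformula that appears. The base case $B = p$ is immediate from the definition of $v^f$: if $x \models p$ then $[x] \in v^f(p)$ by construction, while conversely $[x] \in v^f(p)$ means some $y \sim x$ satisfies $p$, and since $p \in \Sigma$ the relation $\sim$ transfers this to $x$. The Boolean cases $C \wedge D$ and $C \vee D$ are routine applications of the induction hypothesis.

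For the intuitionistic $B = C \to D$ (and the dual case $\neg C$), I would first observe that an implication or negation lying in $\Sigma$ must, by the definition of $\Sigma$, already belong to $\mathrm{Sub}(A)$, so $B$ itself is available in $\Sigma$. In the forward direction, given $[y] \geq^f [x]$ with $[y] \models C$, the induction hypothesis yields $y \models C$; the assumption $x \models C \to D$ combined with $B \in \Sigma$ and $[x] \leq^f [y]$ propagates to $y \models C \to D$, hence $y \models D$, and the induction hypothesis returns $[y] \models D$. In the converse direction, Lemma~\ref{Lem:RelProperties}(a) lifts any $y \geq x$ to $[x] \leq^f [y]$, after which the induction hypothesis closes the argument.

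The modal cases are where the construction of $\Sigma$, $R^f$, and Lemma~\ref{Lem:Condition} pay off. For $B = \UP C$, the direction $x \models \UP C \Rightarrow [x] \models \UP C$ applies Lemma~\ref{Lem:RelProperties}(b) to a witness $y$ with $x \, R \, y$ and $y \models C$, then invokes the induction hypothesis on $C$. The converse takes $[x] \, R^f \, [y]$ with $[y] \models C$, uses the induction hypothesis to recover $y \models C$, and invokes Lemma~\ref{Lem:Condition} with the hypothesis $\UP C \in \Sigma$ to conclude $x \models \UP C$. The case $B = \Down C$ is handled dually: one direction uses $\Down C \in \Sigma$ together with the defining clause of $R^f$, while the other uses Lemma~\ref{Lem:RelProperties}(b) to turn $y \, R \, x$ into $[y] \, R^f \, [x]$ and then appeals to the induction hypothesis. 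The main obstacle throughout is ensuring that every formula one needs to feed into the characterisations of $\leq^f$ and $R^f$ actually sits in $\Sigma$; this is precisely what Lemma~\ref{Lem:Increasing} (used inside Lemma~\ref{Lem:Condition}) and the subformula closure of $\Sigma$ were engineered to guarantee.
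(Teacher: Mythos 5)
Your proposal is correct and follows essentially the same route as the paper's proof: induction over $\Sigma$ (using its subformula closure), the definition of $\le^f$ applied to the implication/negation itself for the intuitionistic cases, the defining clause of $R^f$ together with Lemma~\ref{Lem:RelProperties} for $\Down C$, and Lemma~\ref{Lem:Condition} for the converse direction of $\UP C$. No gaps beyond routine detail-filling.
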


\begin{proof} By induction on $B$. This can be done, because $\Sigma$ is closed under subformulas.
The base case follows immediately from the definition of $v^f$, and
with respect to $\vee$ and $\wedge$ the proof is obvious. 

(i) Let $B$ of the form $\neg C  \in \Gamma$. Assume $[x] \models \neg C$. If $x \not \models \neg C$, then
there exists $y \geq x$ such that $y \models C$. Since $\Gamma$ is closed under subformulas,
also $C \in \Gamma$ and $[y] \models C$ by the induction hypothesis. Because $y \geq x$,
we have $[y] \geq^f [x]$ by Lemma~\ref{Lem:RelProperties}. This gives that 
$[x] \not \models \neg C$, a contradiction. So, $x \models \neg C$.

Conversely, suppose that $x \models \neg C$. Because $C \in \Gamma$, then by the
definition, $[y] \geq^f [x]$ implies $y \models \neg C$ and $y \not \models C$. 
By the induction hypothesis, we have that $[y] \geq^f [x]$ implies $[y] \not \models C$, 
that is, $[x] \models \neg C$.

(ii) Let $B$ of the form $C \to D \in \Gamma$. Assume $x \models C \to D$ and $[x] \not \models C \to D$.
Then, there exists $[y] \ge^f [x]$ such that $[y] \models C$, but $[y] \not \models D$.
By induction hypothesis, $y \models C$ and $y \not \models D$. Therefore,
$y \not \models C \to D$, which is impossible because  $[y] \ge^f [x]$.
 Thus, $[x] \models C \to D$.

On the other hand, if $[x] \models C \to D$, then for all $[y] \ge^f [x]$, $[y] \models C$ implies
$[y] \models D$. If $x \not \models C \to D$, there exists $y \ge x$ such that $y \models C$
and $y \not \models D$. Now $y \geq x$ gives $[y] \ge^f [x]$, and $[y] \models C$ and
$[y] \not \models D$ by the induction hypothesis. But this is impossible. So,
$x \models C \to D$.

(iii) Let $B$ be of the form $\Down C \in \Gamma$. 
Assume that $x \models \Down C$. If $[y] \, R^f \, [x]$, then $y \models C$,
and $[y] \models C$ follows from the induction hypothesis. Hence, $[x] \models \Down C$.

Conversely, assume that $[x] \models \Down C$ and $y \, R \, x$. Then,  $[y] \, R^f \, [x]$ by
Lemma~\ref{Lem:RelProperties}, which gives $[y] \models C$. We obtain $y \models C$ by 
the induction hypothesis, and so $x \models \Down C$. 

(iv) Let $B$ be of the form $\UP C  \in \Gamma$. If $x \models \UP C$, then there exists $y$ such that $x \, R y$
and $y \models C$. By the induction hypothesis, $[y] \models C$. Since $x \, R y$, we have 
$[x] R^f [y]$ and $[x] \models \UP C$.

On the other hand, if  $[x] \models \UP C$, then there exists $y$ such that $[x] \, R^f \, [y]$ and
$[y] \models C$. This implies $y \models C$ by the induction hypothesis. By Lemma~\ref{Lem:Condition},
we get $x \models \UP C$.
\end{proof}

Finally, we may write the following proposition.

\begin{proposition} \label{Prop:FMP}
{\rm IntGC} has the finite model property and is decidable.
\end{proposition}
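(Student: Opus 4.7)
The plan is to read off the proposition directly from the filtration machinery already assembled. Since $A$ is not provable, the Kripke completeness result quoted at the end of Section~\ref{Sec_IntGC} furnishes an IntGC-model $\mathcal{M}=(X,\le,R,v)$ and a point $x_0 \in X$ with $x_0 \not\models A$. I would then form the filtration $\mathcal{M}^f$ of $\mathcal{M}$ through $\Sigma$ exactly as built above. Because $A$ lies in $\mathrm{Sub}(A) \subseteq \Sigma$, Lemma~\ref{Lem:Filtration} yields $[x_0] \not\models A$ in $\mathcal{M}^f$. Lemma~\ref{Lem:FitratrationFrame} certifies that $(X^f,\le^f,R^f)$ really is an IntGC-frame, and Lemma~\ref{Lem:Quotient} certifies that it is finite. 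Hence $\mathcal{M}^f$ is the desired finite counter-model, which establishes the finite model property.

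For decidability I would then run the standard Harrop-style argument. The set of IntGC-theorems is recursively enumerable, by enumerating proofs from the recursively presented axioms together with modus ponens, substitution, and (GC1)--(GC2). By FMP, the complement is also recursively enumerable: one can effectively list all finite structures $(X^f,\le^f,R^f)$ whose $\le^f$ is a preorder and whose $R^f$ satisfies \eqref{Eq:frame}, together with all valuations of the propositional variables occurring in $A$, and test whether $A$ is refuted at some world. A formula is non-provable if and only if some such finite structure refutes it, and this is discovered in finite time. Running the two enumerations in parallel gives a decision procedure for provability in IntGC.

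The assembly at the level of the proposition itself is mechanical. The genuine labour lives in the preceding lemmas, and the main conceptual obstacle, as I see it, was the correct choice of the saturation $\Sigma$: it must simultaneously be closed under subformulas (Lemma~\ref{Lem:Subformula}), closed under the operations $B \mapsto \UP B$ and $B \mapsto \Down B$ applied to the relevant $\Down$- and $\UP$-formulas (Lemma~\ref{Lem:Increasing}), and rich enough that the induction for the modal cases of the Filtration Lemma goes through by way of the alternative characterisation in Lemma~\ref{Lem:Condition}, which in turn rests on the equivalences (f1) and (f2). Once those ingredients are in place, transporting the refutation of $A$ from $x_0$ to $[x_0]$ is immediate, and decidability follows formally from FMP.
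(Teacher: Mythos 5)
Your proposal is correct and follows essentially the same route as the paper: take a Kripke counter-model for the unprovable $A$ (via the completeness result), pass to the filtration through $\Sigma$, and use Lemmas~\ref{Lem:Quotient}, \ref{Lem:FitratrationFrame} and \ref{Lem:Filtration} to obtain a finite counter-model. The only difference is cosmetic: where the paper invokes the well-known fact that a finitely axiomatised logic with FMP is decidable, you spell out the underlying Harrop-style double enumeration, which is just the proof of that fact.
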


\begin{proof} Suppose that a formula $A$ is not provable. Then, there exists a model $\mathcal{M} = (X,\le,R)$
such that $A$ is not valid in $\mathcal{M}$. This means that there exists $x \in X$ such that $x \not \models A$.
We may define the set $\Sigma$ and the filtration of $\mathcal{M}$ through $\Sigma$ as above.
Because $A \in \Sigma$, then $[x] \not \models A$ by Lemma~\ref{Lem:Filtration}, and hence $A$ is not
valid in the finite model $\mathcal{M}^f$.

In addition, it is well known that if a logic is finitely axiomatised with the finite model property, then the logic is decidable. 
\end{proof}

\providecommand{\bysame}{\leavevmode\hbox to3em{\hrulefill}\thinspace}
\providecommand{\MR}{\relax\ifhmode\unskip\space\fi MR }
% \MRhref is called by the amsart/book/proc definition of \MR.
\providecommand{\MRhref}[2]{%
  \href{http://www.ams.org/mathscinet-getitem?mr=#1}{#2}
}
\providecommand{\href}[2]{#2}

\section*{Contact Addresses}

\end{document}